\documentclass[11pt]{amsart}
\usepackage{amsmath,amsfonts,amssymb,enumerate,mathrsfs,mathtools,amscd}

\newcommand{\R}{\mathbb{R}}                   
\newcommand{\C}{\mathbb{C}}                   
\newcommand{\G}{\Gamma}
\newcommand{\Sg}{\Sigma}

\newcommand{\Reg}{\mathrm{Reg}}
\newcommand{\Sing}{\mathrm{Sing}}
\newcommand{\Int}{\mathrm{Int}}
\newcommand{\vp}{\varphi}

\newcommand{\An}{\mathscr{A}}                 
\newcommand{\e}{\varepsilon}

\theoremstyle{plain}
\newtheorem{theorem}{Theorem}[section]
\newtheorem{proposition}[theorem]{Proposition}
\newtheorem{lemma}[theorem]{Lemma}
\newtheorem{corollary}[theorem]{Corollary}

\theoremstyle{definition}

\newtheorem{example}[theorem]{Example}
\newtheorem{remark}[theorem]{Remark}

\numberwithin{equation}{section}

\begin{document}

\title[Arc-analytic subanalytic functions on complex manifolds]{Arc-analytic subanalytic functions on complex manifolds}

\author{Janusz Adamus}
\address{Department of Mathematics, The University of Western Ontario, London, Ontario, N6A 5B7 Canada}
\email{jadamus@uwo.ca}
\author{Rasul Shafikov}
\address{Department of Mathematics, The University of Western Ontario, London, Ontario, N6A 5B7 Canada}
\email{shafikov@uwo.ca}
\thanks{Research was partially supported by the Natural Sciences and Engineering Research Council of Canada}

\subjclass[2020]{32B20, 14P15, 32V10, 32V40, 32B10, 32C05, 32C25}
\keywords{subanalytic set, subanalytic function, arc-symmetric set, arc-analytic function, CR-manifold, CR-function}

\begin{abstract}
We show that an arc-analytic subanalytic function on a complex manifold $M$, which is holomorphic near one point, is a holomorphic function on $M$.
More generally, an arc-analytic subanalytic function on a real analytic CR-manifold $M$, which is CR on a nonempty open subset of $M$, is a CR-function on the whole $M$.
\end{abstract}
\maketitle


\section{Introduction}
\label{sec:intro}

The purpose of this article is to study the behaviour of arc-analytic functions with subanalytic graphs defined on complex (or, more generally, CR-) manifolds.
The arc-analytic subanalytic functions play an important role in real analytic geometry, thanks to the fundamental work of Bierstone-Milman \cite{BM2}. It is well known that an arc-analytic subanalytic function $f$ on a real analytic manifold $M$ is real analytic outside a closed subanalytic subset $S(f)\subset M$, and it follows from \cite{BM2} (or \cite{Par}) that $S(f)$ has codimension at least 2 in $M$. However, the global behaviour of these functions is rather poorly understood.

By contrast, the present note shows that, when defined over complex- or CR-manifolds, the local regularity of such functions has profound global consequences.
The following are our main results:

\begin{theorem}
\label{thm:glob-holo-map}
Let $M$ be a connected complex manifold, let $N$ be a complex manifold, and let $f:M\to N$ be an arc-analytic subanalytic mapping. Suppose that $f$ has a holomorphic germ $f_p$, for some $p\in M$. Then, $f$ is a holomorphic map on $M$.
\end{theorem}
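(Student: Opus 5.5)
First reduce to scalar functions: composing with holomorphic local coordinates of $N$, holomorphy of $f$ is local and coordinatewise. Since $f$ is arc-analytic it is continuous (arc-analytic subanalytic functions are continuous by \cite{BM2}). So near any point, $f$ lands in a coordinate chart of $N$, and the components are arc-analytic subanalytic complex-valued functions, i.e. pairs of real arc-analytic subanalytic functions. Let $U\subset M$ be the set of points where $f$ has a holomorphic germ. By definition $U$ is open, and it is nonempty since $p\in U$. Since $M$ is connected, it suffices to show that $U$ is closed.

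Take $q\in\overline U$ and work in a coordinate ball $B$ around $q$ with $f(B)$ in a chart of $N$, so that $f=(f_1,\dots,f_m)$ with each $f_j$ arc-analytic and subanalytic on $B$. By the result recalled in the introduction, each $f_j$ is real analytic on $B\setminus S$, where $S=\bigcup_j S(f_j)$ is a closed subanalytic set of real codimension at least $2$. Hence $B\setminus S$ is connected. On $B\setminus S$ the functions $\bar\partial f_j$ are real analytic, and they vanish on the nonempty open set $U\cap B\setminus S$. By the identity principle they vanish on all of $B\setminus S$, so each $f_j$ is holomorphic on $B\setminus S$.

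Now $f_j$ is continuous on $B$ and holomorphic off a closed set of real codimension $\ge2$. Such a set has locally finite $(2n-2)$-dimensional Hausdorff measure, being subanalytic of dimension at most $2n-2$. So it is removable for continuous (indeed bounded) holomorphic functions by the Riemann extension theorem in the Shiffman form. Alternatively, one can argue via distributions: $\bar\partial f_j$ is a distribution supported on $S$ of order zero in the relevant sense, and it vanishes because continuous functions do not see sets of Hausdorff codimension $>1$. Either way $f_j$ is holomorphic on $B$, so $q\in U$ and $U$ is closed.

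The main obstacle is the removability step together with the codimension-2 bound on $S(f)$. The bound is taken from \cite{BM2} or \cite{Par}. If Shiffman's theorem is not wanted, I would first try a direct argument. For a generic complex line $L$ through a point of $S$, the set $L\cap S$ is a subanalytic set of real dimension $0$, hence discrete. The restriction $f_j|_L$ is continuous and holomorphic off isolated points, hence holomorphic by the one-variable Riemann theorem. Holomorphy along generic lines, combined with continuity and a Hartogs/Cauchy integral argument on polydiscs, then gives holomorphy on $B$.
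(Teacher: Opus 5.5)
Your proof is correct, and it takes a genuinely different and much shorter route than the paper.

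The paper never works with $\bar\partial f$. Instead it shows that the graph $\G_f$ is a complex analytic subset of $M\times N$, in three steps:
\begin{enumerate}
\item Via Proposition~\ref{prop:glob-cplx-analytic} (holomorphic closure dimension from Adamus--Shafikov, local C-irreducibility of $\G_f\setminus\Sg_f$, and arc-symmetry), it shows that $\G_f\setminus\Sg_f$ is complex analytic off the small set $\Sg_f=\bar f(S(f))$.
\item It invokes the Peterzil--Starchenko criterion to fill in $\Sg_f$.
\item It reads off holomorphy of $f$ from the fact that the bijective holomorphic projection $\G_f\to M$ is biholomorphic.
\end{enumerate}

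You instead apply the identity principle directly to the real analytic functions $\bar\partial f_j$ on the connected set $B\setminus S$. You then remove $S$ by Shiffman's extension theorem, which applies because $f_j$ is locally bounded and $\dim_\R S\le 2n-2$ forces $\mathcal{H}^{2n-1}(S)=0$. So the codimension-$2$ bound does double duty: it gives connectedness of $B\setminus S$ and it makes $S$ negligible. Your cutoff/distributional variant is sound for the same reason.

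Your route needs only continuity of $f$ and the codimension bound on $S(f)$. It uses neither the arc-symmetry of $S(f)$ nor holomorphic closure dimension nor Peterzil--Starchenko. Essentially the same two steps, with real analytic local frames of $H^{0,1}M$ in place of $\bar\partial$ and Lemma~\ref{lem:Sh-lem-5-1} in place of Shiffman, would also handle Theorem~\ref{thm:glob-CR-map}. The paper's heavier route buys the independent structural statement of Proposition~\ref{prop:glob-cplx-analytic}: a connected arc-symmetric semianalytic set that is locally C-irreducible and contains one complex analytic germ is globally complex analytic.

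Your fallback via complex lines is not needed. As phrased (generic lines through a point of $S$) it would not feed directly into Osgood's lemma. It would have to be reorganized into parallel families of coordinate lines, with density plus continuity giving holomorphy on every such line.
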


\begin{theorem}
\label{thm:glob-CR-map}
Let $M$ be a connected real analytic CR manifold, and let $f:M\to\C^N$ be an arc-analytic subanalytic map. Suppose that $f$ is CR on some nonempty open $U\subset M$. Then $f$ is a CR map on $M$.
\end{theorem}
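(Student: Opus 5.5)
We prove Theorem~\ref{thm:glob-CR-map} by a connectedness argument in the spirit of Theorem~\ref{thm:glob-holo-map}: we show that the set where $f$ is CR is both open and closed. Work componentwise, so $f:M\to\C$. Let $S=S(f)$ be the closed subanalytic set outside which $f$ is real analytic; by \cite{BM2} it has codimension at least $2$ in $M$, so $M\setminus S$ is connected and dense. The proof has three steps: first show $f$ is CR on $M\setminus S$, then across $S$, then conclude $f$ is CR on all of $M$.

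\emph{Step 1: $f$ is CR on $M\setminus S$.} For a real analytic $f$, being CR means that $\bar L f\equiv 0$ for all local sections $\bar L$ of the anti-holomorphic tangent bundle $T^{0,1}M$. This bundle is real analytic, since $M$ is a real analytic CR manifold. Locally choose a real analytic frame $\bar L_1,\dots,\bar L_m$. The functions $\bar L_j f$ are real analytic on the connected open set $M\setminus S$ and vanish on the nonempty open set $U\setminus S$ (nonempty because $S$ is nowhere dense). Cover $M\setminus S$ by frame neighbourhoods and chain them together. By the identity theorem for real analytic functions, each $\bar L_j f$ vanishes identically on $M\setminus S$. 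Hence $f$ is CR on $M\setminus S$.

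\emph{Step 2: passing across $S$.} Here $f$ is continuous, since arc-analytic subanalytic functions are continuous, and $S$ has codimension $\ge 2$. I would argue that $f$ is CR on $M$ in the weak (distributional) sense. For a test form, the pairing $\langle \bar L f,\phi\rangle=-\int f\,\bar L^{*}\phi$ splits as an integral over a small tubular neighbourhood $V_\e$ of $S$ plus an integral over its complement. On the complement, integrate by parts using $\bar Lf=0$; this leaves a boundary term over $\partial V_\e$. That term is bounded by $\sup|f|\cdot\mathrm{vol}(\partial V_\e)$, which tends to $0$ because $\mathrm{codim}\,S\ge 2$. Here subanalyticity of $S$ gives the volume estimate $\mathrm{vol}(\partial V_\e)=O(\e)$, for example via a Lojasiewicz-type or Hausdorff-measure bound on subanalytic tubes. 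So $f$ is a continuous function satisfying the tangential Cauchy--Riemann equations in the sense of distributions, which is the standard definition of a CR function of class $C^0$.

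\emph{Step 3: conclusion.} If the intended notion is that of a continuous CR function, we are done. If instead one wants $f$ real analytic or smooth and CR everywhere, this cannot hold in general: $M$ need not be generic, for instance when $M$ is totally real the CR condition is empty. So I take the continuous or distributional notion. The main obstacle is Step~2: justifying the removability of $S$ for the tangential CR operator. This needs a uniform control of $\mathrm{vol}(\partial V_\e)$ near singular points of the subanalytic set $S$. It also needs care when $S$ is not transverse to the CR structure. I would handle the latter by using the fact that removability of codimension-$2$ closed sets of zero $(\dim M-1)$-dimensional Hausdorff measure holds for any first-order operator acting on bounded functions.
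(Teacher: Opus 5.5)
Your proof is correct, and it takes a genuinely different and much more elementary route than the paper.

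\textbf{How the paper proves it.} To get CR-ness on $M\setminus S(f)$, the paper:
\begin{enumerate}
\item embeds $M$ generically in a complex manifold (Andreotti--Fredricks);
\item extends $f$ holomorphically near $\tilde U$ using \cite{T}, which gives holomorphic closure dimension $n$ there;
\item propagates this dimension over $\G_f\setminus(\Sg_f\cup\Sg)$ via Theorem~\ref{thm:AS-thm-1-1};
\item recovers $f$ near generic points as a branch of the projection of the $n$-dimensional holomorphic closure of the graph;
\item removes the exceptional set, which may have codimension one in $M$, and finally $S(f)$, stratum by stratum with Lemma~\ref{lem:Sh-lem-5-1}.
\end{enumerate}

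\textbf{How your route differs.} Your Step~1 replaces all of this with one observation. Since $f$ is real analytic on the connected set $M\setminus S(f)$ and $H^{0,1}M$ has real analytic local frames, the identity theorem propagates $\bar L_jf=0$ from $U\setminus S(f)$ to all of $M\setminus S(f)$. Your Step~2 replaces Lemma~\ref{lem:Sh-lem-5-1} with the standard cutoff argument for first-order operators. That argument needs only local boundedness of $f$ and $\mathcal H^{d-1}(S(f))=0$, which holds because $S(f)$ is subanalytic of dimension $\le d-2$.

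I would lead with the Hausdorff-measure version. Cover $S(f)\cap\operatorname{supp}\phi$ by balls with $\sum r_i^{d-1}$ small, and build $\chi$ with $\int|\nabla\chi|\lesssim\sum r_i^{d-1}$. This avoids any regularity question about $\partial V_\e$. It also shows your transversality worry is moot, since the estimate never uses the direction of $\bar L$ relative to $S(f)$.

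\textbf{What each route buys.} Yours needs no embedding, no \cite{AS}, and no \cite{T}. It works for abstract real analytic CR structures and treats CR dimension $0$ and CR codimension $0$ uniformly, so it also reproves Theorem~\ref{thm:glob-holo-map} for $\C^N$-valued maps. The paper's route does not require $f$ to be analytic on a connected dense open set. That is why it adapts to the weaker hypotheses of Remark~\ref{rem:alternative-assumptions}, where the non-analyticity locus may have codimension one and the identity-theorem argument is unavailable.

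\textbf{Minor slip in Step~3.} $\R^n\subset\C^n$ is totally real \emph{and} generic, so ``$M$ need not be generic'' is not the right reason. It is the totally real case itself, or Examples~\ref{ex:2} and~\ref{ex:3}, that shows $f$ need not be smooth.
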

\smallskip

A considerably weaker, semialgebraic variant of the above theorem was proved in \cite[Thm.\,1.1]{A1}. Without arc-analyticity, but under additional assumptions that $f$ be continuous and the graph of $f$ be real analytic, it is \cite[Thm.\,1.3]{Sh}.
\smallskip

Recall that a subset $X$ of a real analytic manifold $M$ is called \emph{semianalytic}, when every point $p\in M$ has an open neighbourhood $U$ such that $X\cap U$ is a finite union of sets of the form
\[
\{x\in U\;:\;f(x)=0, g_1(x)>0,\dots,g_k(x)>0\}\,,
\]
where $f,g_1,\dots,g_k\in\An(U)$ are real analytic functions on $U$. A set $Y\subset M$ is called \emph{subanalytic}, when for every point $p\in M$ there are an open neighbourhood $U$ and a semianalytic set $X$ in another manifold $N$, such that $Y\cap U=\pi(X)$, where $\pi:N\to M$ is a proper real analytic map.

Let $X\subset M$ be non-empty. A function $f:X\to\R^n$ is called \emph{arc-analytic}, if $f\circ\gamma$ is a real analytic function for every analytic arc $\gamma:(-1,1)\to X$. It is called a \emph{subanalytic function}, if its graph $\G_f$ is a subanalytic subset of $M\times\R^n$.
\medskip

For definitions of CR manifolds, CR functions, and basic facts from CR geometry, see Section~\ref{sec:background}. Section~\ref{sec:background} also contains a review of relevant results from arc-analytic  and C-analytic geometry. The proofs of Theorem~\ref{thm:glob-holo-map} and related results are given in Section~\ref{sec:pf-thm-holo}. Section~\ref{sec:pf-thm-CR} contains the proof of Theorem~\ref{thm:glob-CR-map}. In the final section, we discuss several examples of arc-analytic functions with subanalytic graphs, that are interesting in the context of the above theorems.


\section{Background}
\label{sec:background}

\subsection*{C-analytic sets}

Let $\Omega$ be a real analytic manifold, and let $\Omega^*$ denote its complexification (see, e.g., \cite{GMT} for a modern exposition of complexification of real analytic spaces). A set $R\subset\Omega$ is called \emph{C-analytic}, when there exists an open neighbourhood $V^*$ of $\Omega$ in $\Omega^*$ and a complex analytic set $Z$ in $V^*$ such that $Z\cap\Omega=R$ (see, e.g., \cite{WB}). 
The following properties of C-analytic sets will be used throughout the paper.

\begin{remark}
\label{rem:C-analytic}
{~}
\begin{itemize}
\item[(i)] By \cite[Prop.\,10]{WB} (cf. \cite[Prop.\,15]{Car}), $R$ is a C-analytic subset of $\Omega$ if and only if $R$ can be realized as the common zero locus of finitely many real analytic functions on $\Omega$, and thus $R=f^{-1}(0)$ for some $f\in\An(\Omega)$. In particular, every point $p$ of a real analytic set $R\subset\Omega$ admits an open neighbourhood $U^p$ such that $R\cap U^p$ is C-analytic in $U^p$.
\item[(ii)] For a set $S\subset\Omega$, its \emph{C-analytic closure} is the smallest C-analytic set in $\Omega$ which contains~$S$. It is well defined, as the intersection of any family of C-analytic sets is itself C-analytic (see, e.g., \cite[\S\,8]{WB}).
\item[(iii)] A C-analytic set $R\subset\Omega$ is called \emph{C-irreducible} if it cannot be represented as a union of two proper C-analytic subsets in $\Omega$. By \cite[Prop.\,11]{WB}, every C-analytic set in $\Omega$ admits a locally finite decomposition into C-irreducible C-analytic sets, called its \emph{C-irreducible components}.
\item[(iv)] By the corollary following \cite[Prop.\,12]{WB}, if $R$ is a C-irreducible subset of $\Omega$ and $S\subset R$ is C-analytic in $\Omega$, then $\dim_\R{S}<\dim_\R{R}$ or else $S=R$.
\end{itemize}
\end{remark}
\smallskip

\subsection*{Arc-analytic maps and arc-symmetric sets}

Let $M$ be a real analytic manifold. A set $E\subset M$ is called \emph{arc-symmetric}, when for every analytic arc $\gamma:(-1,1)\to M$ with $\Int(\gamma^{-1}(E))\neq\varnothing$, one has $\gamma((-1,1))\subset E$. (Here, $\Int(S)$ denotes the interior of a set $S$.) Arc-symmetric sets were first introduced and studied by Kurdyka in his seminal paper \cite{Ku}, in the semialgebraic setting. Recently, their main algebro-geometric properties have been generalized to the semianalytic and subanalytic setting by the first signed author in \cite{A2}. In the present note, we shall only need the following basic properties of these sets.

\begin{remark}
\label{rem:AR-facts}
{~}
\begin{itemize}
\item[(i)] Subanalytic arc-symmetric sets are closed in Euclidean topology. This follows from the subanalytic Curve Selection Lemma (see, e.g., \cite[1.17]{vDDM}).
\item[(ii)] Real analytic sets are arc-symmetric.
\item[(iii)] Let $\Omega$ be a bounded, connected, subanalytic, real analytic manifold in $\R^n$, let $\G$ be a connected, smooth, subanalytic subset of $\Omega$, and let $E\subset\Omega$ be subanalytic and arc-symmetric. Then, by \cite[Lem.\,2.3]{A2},
\[
\G\not\subset E\Longrightarrow \dim_\R(\G\cap E)<\dim_\R\G\,.
\]
\end{itemize}
\end{remark}

\begin{lemma}
\label{lem:image-arcsym}
Let $M$ be a real analytic manifold, let $f:M\to\R^n$ be an arc-analytic subanalytic function, and let $E\subset M$ be an arc-symmetric subanalytic set. Let $\G_f\subset M\times\R^n$ denote the graph of $f$, and let $\bar{f}:M\ni x\mapsto(x,f(x))\in\G_f$. Then, $\bar{f}(E)$ is an arc-symmetric subanalytic subset of $M\times\R^n$, and $\dim_\R{\bar{f}(E)}=\dim_\R{E}$.
\end{lemma}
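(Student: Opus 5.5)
Three things must be shown: $\bar{f}(E)$ is subanalytic, it is arc-symmetric, and $\dim_\R\bar{f}(E)=\dim_\R E$. I will work directly from the definitions, using two facts about the graph map. First, $\bar{f}$ is a bijection $M\to\G_f$ whose inverse is the restriction to $\G_f$ of the projection $\pi:M\times\R^n\to M$. Second, $\bar{f}$ is arc-analytic because $f$ is.

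\textbf{Subanalyticity.} Write
\[
\bar{f}(E)=\G_f\cap(E\times\R^n).
\]
Here $\G_f$ is subanalytic by hypothesis and $E\times\R^n$ is subanalytic, so their intersection is subanalytic.

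\textbf{Dimension.} The two inequalities come from the two directions of the bijection.
\begin{itemize}
\item Since $\pi$ restricts to a bijection $\bar{f}(E)\to E$, and subanalytic maps do not increase dimension, we get $\dim E\le\dim\bar{f}(E)$.
\item For the reverse inequality, $f|_E$ is a subanalytic map, so $E$ admits a finite stratification (locally finite, if we argue globally) into subanalytic analytic submanifolds on each of which $f$ is analytic. On each stratum $S$, the map $\bar{f}|_S$ is an analytic embedding. Hence $\dim\bar{f}(S)=\dim S$, and taking the maximum over strata gives $\dim\bar{f}(E)\le\dim E$.
\end{itemize}
Alternatively, the subanalytic bijection $\pi|_{\bar{f}(E)}$ preserves dimension, which gives both inequalities at once.

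\textbf{Arc-symmetry.} This is the main step. Let $\delta:(-1,1)\to M\times\R^n$ be analytic with $\Int(\delta^{-1}(\bar{f}(E)))\neq\varnothing$, and write $\delta=(\gamma,\eta)$ with $\gamma=\pi\circ\delta$, which is an analytic arc in $M$.

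\begin{enumerate}
\item On a nonempty open interval $I$ we have $\delta(t)\in\bar{f}(E)$. This means $\gamma(t)\in E$ and $\eta(t)=f(\gamma(t))$ for $t\in I$.
\item Since $E$ is arc-symmetric and $\gamma^{-1}(E)\supset I$, we get $\gamma((-1,1))\subset E$.
\item Since $f$ is arc-analytic, $f\circ\gamma$ is real analytic on $(-1,1)$.
\item The analytic functions $\eta$ and $f\circ\gamma$ agree on $I$, so by the identity principle they agree on all of $(-1,1)$.
\end{enumerate}
Therefore $\delta(t)=(\gamma(t),f(\gamma(t)))\in\bar{f}(E)$ for every $t$, which proves arc-symmetry.

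The only delicate point is step 3: it requires $f\circ\gamma$ to be analytic on the whole interval, not just near $I$. This holds because $f$ is defined and arc-analytic on all of $M$, and $\gamma$ is an analytic arc into $M$, so the definition of arc-analyticity applies directly to $\gamma$.
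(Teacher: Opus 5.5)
Your proposal is correct and follows the paper's proof essentially step for step. Subanalyticity comes from $\G_f\cap(E\times\R^n)$. Arc-symmetry comes from applying arc-symmetry of $E$ to the first component and the identity principle to $\eta$ and $f\circ\gamma$; you work with an arbitrary open interval $I$ where the paper normalizes to $(-1,0)$. The dimension equality comes from $\bar f$ (equivalently $\pi|_{\bar f(E)}$) being a subanalytic bijection, and your stratification argument simply unpacks that fact.
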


\begin{proof}
The set $\bar{f}(E)$ is subanalytic in $M\times\R^n$, as the intersection of subanalytic sets $\G_f\cap(E\times\R^n)$.
For the proof of arc-symmetry, let $\gamma=(\gamma_1,\gamma_2):(-1,1)\to M\times\R^n$ be an analytic arc, with $\gamma_1:(-1,1)\to M$ and $\gamma_2:(-1,1)\to\R^n$, and such that $\gamma((-1,0))\subset \bar{f}(E)$. Then, $\gamma_1((-1,0))\subset E$, and hence $\gamma_1((-1,1))\subset E$, by arc-symmetry of $E$. By arc-analyticity of $f$, $f\circ\gamma_1$ is real analytic. The real analytic functions $f\circ\gamma_1$ and $\gamma_2$ coincide on $(-1,0)$, and hence on the whole interval $(-1,1)$. It follows that $\gamma((-1,1))\subset \bar{f}(E)$, as required.
The equality of dimensions of $\bar{f}(E)$ and $E$ follows from the fact that $\bar{f}$ is a subanalytic injection.
\end{proof}

\begin{corollary}
\label{cor:gen-analytic}
Let $M$ be a real analytic manifold and let $f:M\to\R^n$ be an arc-analytic subanalytic function. Then, there exists an arc-symmetric subanalytic set $\Sg_f\subset\G_f$ such that $\dim_\R\Sg_f\leq\dim_\R\G_f-2$ and $\G_f\setminus\Sg_f$ is a real analytic subset of the manifold $(M\times\R^n)\setminus\Sg_f$.
\end{corollary}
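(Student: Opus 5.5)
The plan is to take $S(f)$, the closed subanalytic set outside of which $f$ is real analytic, and transport it to the graph. Recall from the introduction that $f$ is real analytic on $M\setminus S(f)$, and that $\dim_\R S(f)\le\dim_\R M-2$ by \cite{BM2} (or \cite{Par}). The first step, however, is to replace $S(f)$ by a set that is also arc-symmetric. I would take $E$ to be the smallest arc-symmetric subanalytic set containing $S(f)$, that is, its arc-symmetric closure in the sense of \cite{A2}. One could alternatively take the C-analytic closure, but its dimension is harder to control.

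The key point is to show that $\dim_\R E=\dim_\R S(f)$. Arc-symmetric subanalytic sets are stable under finite intersections, and by Remark~\ref{rem:AR-facts}(iii) applied locally, a descending chain of them stabilizes. Hence the arc-symmetric closure exists and is subanalytic. Its dimension equals that of $S(f)$, since locally the arc-symmetric closure of a subanalytic set is contained in the Zariski-type closure of its strata, which has the same dimension (cf. \cite{A2}).

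If this closure step turns out to be delicate, there is a more hands-on route that exploits arc-analyticity directly. Let $E$ be the set of points at which $f$ is not analytic. I would check arc-symmetry of $E$ itself, or of its closure, as follows. Suppose $\gamma$ is an arc with $\gamma(I)\subset E$ on an open interval $I$. I would argue that $\gamma$ cannot leave $E$ unless it crosses into the analytic locus, and this would contradict the uniqueness of analytic continuation along $\gamma$ combined with local structure. I expect this to be the main obstacle, and I would resolve it by quoting the arc-symmetric closure from \cite{A2} rather than reproving it.

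Once $E$ is fixed, set $\Sg_f:=\bar f(E)$. By Lemma~\ref{lem:image-arcsym}, $\Sg_f$ is arc-symmetric and subanalytic, with $\dim_\R\Sg_f=\dim_\R E\le\dim_\R M-2=\dim_\R\G_f-2$. It is closed by Remark~\ref{rem:AR-facts}(i), so $(M\times\R^n)\setminus\Sg_f$ is an open submanifold. Over $M\setminus E$ the function $f$ is analytic, so $\G_f\setminus\Sg_f=\bar f(M\setminus E)$ is the graph of an analytic map, hence locally analytic near its own points.

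It remains to check that $\G_f\setminus\Sg_f$ is closed in $(M\times\R^n)\setminus\Sg_f$. Take a limit point $(x,y)$ of $\G_f\setminus\Sg_f$ lying outside $\Sg_f$. If $x\notin E$, continuity of $f$ near $x$ (analyticity there) gives $y=f(x)$, so $(x,y)\in\G_f\setminus\Sg_f$. If $x\in E$, then I must show $y=f(x)$, which would put $(x,y)\in\Sg_f$, a contradiction. For this I use that $\G_f$ is closed, because arc-analytic subanalytic functions are continuous. Continuity itself follows from the curve selection lemma: if $x_k\to x$ with $f(x_k)\to y\ne f(x)$, select an analytic arc in the closure of the graph, and the arc-analyticity of $f$ along its projection forces $y=f(x)$. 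Hence $\G_f\setminus\Sg_f$ is closed in the complement, and being locally analytic there, it is a real analytic subset of $(M\times\R^n)\setminus\Sg_f$.
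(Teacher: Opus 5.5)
Your proof has a gap at the step where you produce an arc-symmetric subanalytic set $E\supseteq S(f)$ with $\dim_\R E\le\dim_\R M-2$. You justify $\dim_\R E=\dim_\R S(f)$ by saying that the arc-symmetric closure lies locally in the Zariski-type (analytic) closure of the strata, and that this closure has the same dimension. That holds in Kurdyka's semialgebraic setting but is false for subanalytic sets. Example~\ref{ex:Kurdyka} in this very paper is a $2$-dimensional subanalytic set in $\R^3$, with an analytic surface as its top stratum, that is not contained near the origin in any $2$-dimensional analytic set.

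Worse, the conclusion itself fails for general subanalytic sets. Take $S=\gamma([0,1])\subset\R^4$ with $\gamma(t)=(\cos t,\sin t,\cos\sqrt2\,t,\sin\sqrt2\,t)$. Apply the definition of arc-symmetry to the analytic arc $s\mapsto\gamma(\tan(\pi s/2))$, $s\in(-1,1)$. This shows that every arc-symmetric set containing $S$ contains the dense winding $\gamma(\R)$ of the torus $S^1\times S^1$. If that set is also subanalytic, it is closed by Remark~\ref{rem:AR-facts}(i), so it contains the whole torus and has dimension $2>\dim S$.

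Your descending-chain argument is also only local. Remark~\ref{rem:AR-facts}(iii) concerns bounded sets, and a chain of analytic sets such as $\{x\in\mathbb{Z}:|x|\ge k\}\subset\R$, $k=1,2,\dots$, never stabilizes. Since arc-symmetry is a global property, passing to local closures would not repair this.

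The missing ingredient is the fact the paper quotes. By \cite[Thm.\,3.1]{KP}, the non-analyticity locus $S(f)$ is itself arc-symmetric, and subanalytic of codimension $\ge2$. So no closure operation is needed, and one simply takes $\Sg_f\coloneqq\bar f(S(f))$.

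Your hands-on alternative does not supply this. Analyticity of $f\circ\gamma$ along a single arc, plus one-variable analytic continuation, says nothing about whether $f$ is analytic on a full neighbourhood of a point of $\gamma$. That is what membership in $S(f)$ is about, and it is the real content of the cited theorem.

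Once $S(f)$ is known to be arc-symmetric, the rest of your argument is the paper's proof: Lemma~\ref{lem:image-arcsym}, closedness via Remark~\ref{rem:AR-facts}(i), and local analyticity of the graph over $M\setminus S(f)$. Your additional check, via continuity of $f$, that $\G_f\setminus\Sg_f$ is closed in $(M\times\R^n)\setminus\Sg_f$ is a correct detail that the paper leaves implicit.
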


\begin{proof}
By \cite[Thm.\,3.1]{KP}, the non-analyticity locus $S(f)$ of $f$ is an arc-symmetric subanalytic set in $M$, of dimension $\dim_\R{S(f)}\leq\dim_\R{M}-2$. By Lemma~\ref{lem:image-arcsym}, $\Sg_f\coloneqq \bar{f}(S(f))$ is arc-symmetric, subanalytic, and $\dim_\R\Sg_f\leq\dim_\R\G_f-2$. Now, for any $(p,f(p))\in\G_f\setminus\Sg_f$, there is an open neighbourhood $U$ of $p$ in $M$ such that $f=(f_1,\dots,f_n)$ is analytic on $U$. Then, the set
\[
\G_f\cap(U\times\R^n)=\{(x,y_1,\dots,y_n)\in M\times\R^n:y_1-f_1(x)=\dots=y_n-f_n(x)=0\}
\]
is a real analytic subset of $U\times\R^n$.
\end{proof}
\smallskip

\subsection*{CR manifolds and CR functions}

Given an $\R$-linear subspace $L$ in $\C^n$ of dimension $d$, one defines the {\it CR dimension} of $L$ to be the largest $k$ 
such that $L$ contains a $\C$-linear subspace of (complex) dimension $k$. Clearly, $0\le k\le \left[\frac{d}{2}\right]$.
A $d$-dimensional real-analytic submanifold $M$ of an open set in $\C^n$ is called an {\it embedded \it CR manifold} of CR dimension $k$, if the tangent space $T_p M$ has CR dimension $k$ for every point $p\in M$ (the $k$-dimensional complex vector subspace of $T_p M$ will be then denoted by $H_pM$). We write 
$\dim_{CR}M=k$. If $k=0$, $M$ is called a \emph{totally real} submanifold.
The integer $l:=d-2k$ is called the \emph{CR codimension} of $M$, and the pair $(k,l)$ is the \emph{type} of $M$. A CR submanifold $M$ in $\C^n$ is called \emph{generic} when $n=k+l$, where $M$ is of type $(k,l)$. 

It is possible to define \emph{abstract} CR structure intrinsically on a smooth or real-analytic manifold; see, e.g.,~\cite{Bo}. While in the smooth case an abstract CR manifold does not in general admit a (local) CR embedding into a complex manifold, this is always possible when $M$ is a real-analytic CR manifold. The local embedding was proved by Andreotti-Hill~\cite{AH}, while the global embedding was obtained by Andreotti-Fredricks~\cite{AF}.

The notion of \emph{CR function} is usually defined in terms of tangential Cauchy-Riemann equations as follows. Given a real-analytic CR submanifold $M$ in an open set in $\C^n$, a smooth complex vector field $X$ on $M$ is called an anti-holomorphic CR vector field if $X_p \in H^{0,1}_pM$ for every $p \in M$, where $H^{0,1}_pM = (H_pM \otimes \mathbb{C}) \cap T^{0,1}\mathbb{C}^n$. A $C^1$-smooth function $f : M \to \mathbb{C}$ is CR if $Xf \equiv 0$ for every such vector field $X$ on $M$. In local coordinates of $\mathbb{C}^n$, such a vector field takes the form $X = \sum_{j=1}^n c_j \frac{\partial}{\partial \bar{z}_j}$, where the coefficients $c_j$ are smooth functions on $M$.

In order to use Theorem~\ref{thm:glob-CR-map} in its full generality, we shall need a more general definition that does not require smoothness of $f$. This can be done in terms of distributions. Suppose that $M$ is of type $(k,l)$. A locally integrable function $f:M\to\C$ is called a \emph{CR function} if
\[
\int_Mf\overline{\partial}\alpha=0
\]
for any differential form $\alpha$ of bidegree $(k,k+l-1)$ with compact support.
A \emph{CR mapping} $f=(f_1,\dots,f_N):M\to\C^N$ is one all of whose components $f_j$ are CR functions.

The following lemma will be used in the proof of Theorem~\ref{thm:glob-CR-map}.

\begin{lemma}[{\cite[Lem.\,5.1]{Sh}}]
\label{lem:Sh-lem-5-1}
Let $M$ be a smooth generic submanifold of an open set in $\mathbb C^n$, of positive CR dimension and codimension. Let $S\subset M$ be a smooth submanifold with $\dim S < \dim M$. Then any function $h$ continuous on $M$ and CR on $M\setminus S$ is CR on $M$.
\end{lemma}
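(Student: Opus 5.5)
We prove the statement — Lemma~\ref{lem:Sh-lem-5-1}, a continuous function on $M$ that is CR off a smooth submanifold $S$ of lower dimension is CR on $M$ — by a local removability argument. The CR property is local and defined distributionally: we must show $\int_M h\,\overline{\partial}\alpha=0$ for every compactly supported $(k,k+l-1)$-form $\alpha$. By a partition of unity we may take $\alpha$ supported in a small neighbourhood $U$ of a point $p\in S$. First we reduce to the case $\dim S=\dim M-1$, i.e. $S$ is a hypersurface of $M$ near $p$. If $\dim S<\dim M-1$, then locally $S\cap U$ is contained in a smooth hypersurface $S'$ of $M$ (for instance, any hypersurface through $S$ in a chart). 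CR-ness on $M\setminus S$ then yields CR-ness on $M\setminus S'$, together with CR-ness at points of $S'\setminus S$. So it suffices to remove a smooth hypersurface.

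For a hypersurface $S$ dividing $U$ into two sides $U^\pm$, we use Stokes' formula. Approximate $h$ by smooth functions? A cleaner route is to cut off: let $\chi_\e$ be a function vanishing near $S$ and equal to $1$ at distance $>\e$, with $|d\chi_\e|\lesssim 1/\e$ on a shell of width $\e$. Then
\[
\int_M h\,\overline{\partial}\alpha=\lim_{\e\to0}\int_M h\,\chi_\e\,\overline{\partial}\alpha=-\lim_{\e\to0}\int_M h\,\overline{\partial}\chi_\e\wedge\alpha ,
\]
where we used that $h$ is CR on the support of $\chi_\e\alpha$. The shell has measure $O(\e)$, so the last integral is bounded. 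This alone does not make it vanish; it tends to the jump-type term $\int_S h\,\overline{\partial}\rho\wedge\alpha$-like boundary integrals, one from each side. Because $h$ is continuous, the contributions from $U^+$ and $U^-$ carry the same boundary values of $h$ with opposite orientations, so they cancel. Equivalently, applying Stokes on each of $U^\pm$ (in the weak form, valid for continuous $h$ that are CR inside, via the same cutoff taken one-sidedly), we get $\int_{U^\pm}h\,\overline{\partial}\alpha=\pm\int_S h\,\alpha$, and the sum is zero.

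In the lower-dimensional case, even more simply, if $\operatorname{codim}S\ge 2$ one can choose $\chi_\e$ with $\int|d\chi_\e|\to0$, since the tube has volume $O(\e^2)$ and $|d\chi_\e|=O(1/\e)$. The limit is then zero using only the local boundedness of $h$, and the hypersurface reduction is unnecessary.

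The main obstacle is justifying the one-sided Stokes formula for merely continuous $h$ across a hypersurface, i.e. showing that the cutoff limit equals the boundary integral of the continuous trace. This is where continuity (not just boundedness) is essential: $h\,\overline{\partial}\chi_\e$ concentrates on $S$ and converges to $h|_S$ times the surface measure form, by uniform continuity on compacts. The genericity and positive CR dimension hypotheses ensure the CR operator is the nontrivial tangential $\overline{\partial}_b$, so the forms and the integration-by-parts identity make sense as stated.
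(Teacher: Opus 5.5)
Your proof is correct. The paper gives no proof of this lemma: it is quoted from \cite[Lem.\,5.1]{Sh} and used as a black box in Section~\ref{sec:pf-thm-CR}. So there is no internal argument to compare yours with. What you supply is a self-contained distributional cut-off proof, which is the natural one.

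\textbf{The step you call the main obstacle.} It is really a one-line computation, and it is cleaner to write it directly than to go through one-sided Stokes formulas. Take coordinates $(t,y)$ near $p$ with $t=\rho$ and $y\in S$. Let $\chi_\varepsilon=\phi(\rho/\varepsilon)$, where $\phi$ vanishes near $0$ and equals $1$ for $|s|\geq 1$. Then
\[
\int_M h\,\overline{\partial}\chi_\varepsilon\wedge\alpha=\int_S\int_{-1}^{1}\phi'(s)\,h(\varepsilon s,y)\,g(\varepsilon s,y)\,ds\,dy
\]
for a smooth compactly supported density $g$. By uniform continuity of $h$ this tends to $\bigl(\int_{-1}^{1}\phi'(s)\,ds\bigr)\int_S h(0,y)\,g(0,y)\,dy$, which is $0$ because $\phi(1)=\phi(-1)$. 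If $h$ were only bounded, with continuous one-sided boundary values $h^\pm$, the same computation would give the jump $\int_S (h^+-h^-)\,g\,dy$. This is exactly where continuity is used. Your remark that in codimension $\geq 2$ local boundedness suffices is also right.

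\textbf{Two small points.}
\begin{enumerate}
\item You tacitly assume $S$ is closed in $M$. This is needed so that $M\setminus S$ is open and so that $S$ locally lies in a hypersurface splitting a chart into two sides. It is also how the lemma is applied in Section~\ref{sec:pf-thm-CR}: to top-dimensional strata, in the complement of the lower-dimensional ones.
\item Your identity $\int_{U^\pm}h\,\overline{\partial}\alpha=\pm\int_S h\,\alpha$ holds literally only for the standard test forms of bidegree $(k+l,k-1)$. The bidegree printed in the paper has its entries swapped. For $(k+l,k-1)$-forms, $\partial\chi\wedge\alpha=0$, so $\overline{\partial}\chi_\varepsilon\wedge\alpha=d\chi_\varepsilon\wedge\alpha$. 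The cancellation computation above does not depend on this.
\end{enumerate}

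Finally, genericity and the positivity of CR dimension and codimension play no role in your argument. That is fine; they only fix the setting.
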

\smallskip

\subsection*{Holomorphic closure dimension}

For a semianalytic set $E$ in a complex manifold $M$ and a point $p\in\overline{E}$, the \emph{holomorphic closure dimension} of $E$ at $p$ (denoted $\dim_{\mathrm{HC}}E_p$) is the complex dimension of the smallest complex analytic germ at $p$ in $M$ that contains the germ $E_p$.

For the reader's convenience, we recall here the main results of \cite{AS} concerning the holomorphic closure dimension of real analytic and semianalytic sets used in this note.

\begin{theorem}[{cf.\,\cite[Thm.\,1.1]{AS}}]
\label{thm:AS-thm-1-1}
Let $R$ be an irreducible real analytic set in a complex manifold $M$, of dimension $d>0$. Then, there exists a real analytic subset $S\subset R$ of dimension less than $d$ such that the holomorphic closure dimension of $R$ is constant on $R\setminus S$.
\end{theorem}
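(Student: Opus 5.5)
The plan is to compute the holomorphic closure dimension pointwise on a large open subset of $R$, via the CR geometry of the regular part, and then to show that the locus where this computation is not generic is contained in a real analytic set of smaller dimension.

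\emph{Step 1: the formula at regular points.} Let $p$ be a point where $R$ is a real analytic submanifold of dimension $d$. For $q$ near $p$, set
\[
k(q)=\dim_\C\bigl(T_qR\cap J\,T_qR\bigr),
\]
the CR dimension of $T_qR$, where $J$ is the complex structure of $M$. Let $k_0$ be the minimal value of $k$ near $p$. This minimum is attained on an open dense subset, since $q\mapsto k(q)$ is upper semicontinuous. On that subset $R$ is a real analytic CR manifold of type $(k_0,d-2k_0)$. By the Andreotti--Hill embedding theorem, such a germ is a generic submanifold of a complex submanifold germ of complex dimension $d-k_0$; this germ is the intrinsic complexification. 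It is also the smallest complex analytic germ containing $R_q$. The reason is that a holomorphic function vanishing on a generic real analytic submanifold vanishes identically on the ambient complex manifold, since the real tangent space spans the complex tangent space over $\C$. Hence
\[
\dim_{\mathrm{HC}}R_q=d-k_0
\]
on that subset. At points of the regular part where $k$ jumps, the value can differ, and these points must go into $S$.

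\emph{Step 2: the bad set is analytic.} Write $R=Z\cap\Omega$ locally, or globally via Remark~\ref{rem:C-analytic}, with $Z$ the complexification. The condition $k(q)>k_0$ says that certain minors of a matrix built from a local real analytic frame of $T_qR$ and its image under $J$ have rank below a fixed value. This is a real analytic condition on the regular locus, and it is the restriction of a complex analytic condition on $\Reg Z$ through the Gauss map of $Z$. I would take
\[
S=\bigl(\Sing Z\cup D\bigr)\cap R,
\]
where $D$ is the closure in $Z$ of the complex analytic degeneracy locus, which is complex analytic by Remmert--Stein or by the rank description applied to the closure of the Gauss map graph. The points of $R$ where the local dimension is less than $d$ also lie in $\Sing Z\cap R$, so they are already included. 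In this way $S$ is a real analytic (indeed C-analytic) subset of $R$.

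\emph{Step 3: $\dim S<d$.} This is where irreducibility enters. $S$ is a C-analytic subset of the C-irreducible set $R$, so by Remark~\ref{rem:C-analytic}(iv) it suffices to show $S\neq R$. That holds because $k_0$ is defined as the generic minimum over all of $\Reg R$. Irreducibility ensures the generic value is the same on every piece of the regular part: an analytic function that fails to vanish identically somewhere on the $d$-dimensional part of the complexification fails to do so everywhere. On $R\setminus S$ we then have $\dim_{\mathrm{HC}}R_q=d-k_0$, a constant.

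\emph{Main obstacle.} I expect Step 2 to be the delicate part: making $S$ genuinely real analytic and global rather than merely semianalytic and local. The danger is that the rank-drop locus on $\Reg R$ accumulates badly at $\Sing R$. My first attempt would be to do everything on $Z$ and take closures in the complex analytic category, where Remmert-type closure theorems are available. Only after that would I intersect with the real part and invoke Remark~\ref{rem:C-analytic}(iv).
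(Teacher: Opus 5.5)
The paper does not prove this statement. It quotes it from \cite[Thm.\,1.1]{AS} (hence the ``cf.''), so there is no in-paper argument to compare with, and your outline has to stand on its own. It does: it is a correct proof.

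\textbf{Why it works.} Let $Z$ be the complexification of $R$ in $M^*$. Locally $M^*=\{(z,w)\}\subset\C^{n}\times\C^{n}$, with $n=\dim_\C M$ and real points $w=\bar z$. Let $\pi(z,w)=z$; it is holomorphic and restricts to the identity on $M$. At every $q\in R\cap\Reg Z$,
\[
T_qZ=T_qR\otimes\C,\qquad \ker\bigl(d\pi|_{T_qZ}\bigr)=H^{0,1}_qR,\qquad \mathrm{rank}\,d\pi|_{T_qZ}=d-k(q).
\]
\begin{itemize}
\item Your generic CR dimension $k_0$ is therefore $d-r$, where $r$ is the generic rank of $\pi|_Z$. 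This is a single number because $\Reg Z$ is connected.
\item Your degeneracy set $D$ is exactly the rank-drop locus of $\pi|_{\Reg Z}$.
\item Step~1 does not need Andreotti--Hill. At $q\in R\setminus S$ the rank is $r$ on a neighbourhood of $q$ in $Z$, so by the constant rank theorem $\pi(Z_q)$ is an $r$-dimensional complex manifold germ. A holomorphic germ $h$ vanishes on $R_q$ if and only if $h\circ\pi$ vanishes on $Z_q$, because $R_q$ is maximally real in the connected germ $Z_q$. Hence $\dim_{\mathrm{HC}}R_q=r$.
\end{itemize}

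\textbf{Points to tighten.}
\begin{itemize}
\item \emph{Choice of $Z$.} $Z$ must be the smallest complex analytic set containing $R$ in a neighbourhood of $M$ in $M^*$. This presupposes that $R$ is C-analytic and that ``irreducible'' means C-irreducible. That is how the result is used in Proposition~\ref{prop:glob-cplx-analytic} and Section~\ref{sec:pf-thm-CR}. Minimality gives $Z\cap M=R$, invariance under conjugation, irreducibility of $Z$ (from C-irreducibility of $R$), and $\dim_\C Z=d$.
\item \emph{Why conjugation invariance matters.} It is what shows that every point of $R\cap\Reg Z$ is a $d$-dimensional manifold point of $R$. Consequently, points of smaller local dimension lie in $\Sing Z$, as you claim.
\item \emph{Analyticity of the bad set.} Remmert--Stein does not apply as you invoke it, since components of $D$ may have dimension $\le\dim\Sing Z$. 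What is true is that $\Sing Z\cup D$ is analytic in $Z$. Locally, on all of $Z$, it is cut out by the minors of size $2n-d+r$ of the matrix obtained by stacking the Jacobian of local generators of the ideal of $Z$ on top of $d\pi$. (Your alternative via the closure of the Gauss-map graph, i.e.\ the Nash modification plus Remmert's proper mapping theorem, also works.) The closure $\overline D$ is then the union of the components of this set not contained in $\Sing Z$. In fact you do not need $\overline D$ at all, since $S=(\Sing Z\cup D)\cap R$ already.
\item \emph{Dimension of $S$.} Your appeal to Remark~\ref{rem:C-analytic}(iv) is fine. Alternatively, real points of a complex analytic set of complex dimension $e$ have real dimension $\le e$, and $\Sing Z\cup D$ has complex dimension $<d$.
\end{itemize}
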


\begin{corollary}[{cf.\,\cite[Cor.\,1.2]{AS}}]
\label{cor:AS-cor-1-2}
Let $R$ be a semianalytic set of dimension $d>0$ in a complex manifold $M$. Suppose one of the following conditions holds:
\begin{itemize}
\item[(a)] $R$ is contained in an irreducible real analytic set of the same dimension;
\item[(b)] $R$ is connected, pure-dimensional and is locally contained in a locally irreducible real analytic set of the same dimension.
\end{itemize}
Then, there exists a closed semianalytic subset $S\subset R$, of dimension less than $d$ and such that the holomorphic closure dimension of $R$ is constant on $R\setminus S$.
\end{corollary}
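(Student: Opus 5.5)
The plan is to deduce both cases from Theorem~\ref{thm:AS-thm-1-1}. The idea is to throw away a closed semianalytic set of dimension $<d$, outside of which the germ $R_p$ coincides with the germ of an ambient irreducible real analytic set. Since the holomorphic closure dimension depends only on the germ, it then transfers from that set to $R$.

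\emph{Case (a).} Let $X\supset R$ be irreducible real analytic with $\dim X=d$. By Theorem~\ref{thm:AS-thm-1-1} there are a real analytic $S_X\subset X$ with $\dim S_X<d$ and an integer $m$ such that $\dim_{\mathrm{HC}}X_p=m$ for all $p\in X\setminus S_X$. Next, set
\[
S\coloneqq R\cap\Bigl(S_X\cup\Sing X\cup \overline{R\setminus\Int_X R}\Bigr),
\]
where $\Int_X R$ is the interior of $R$ relative to $X$. This $S$ is closed in $R$ and semianalytic, since interiors and closures of semianalytic sets are semianalytic.

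For $p\in R\setminus S$, $p$ lies in the relative interior of $R$ in $X$, so $R_p=X_p$ and hence $\dim_{\mathrm{HC}}R_p=\dim_{\mathrm{HC}}X_p=m$. It remains to check $\dim S<d$:
\begin{itemize}
\item $\dim\Sing X<d$ and $\dim S_X<d$;
\item $R\setminus\Int_X R$ lies in the frontier of $R$ inside the $d$-dimensional set $X$ together with the part of $R$ of local dimension $<d$. This is semianalytic of dimension $<d$, and its closure has the same dimension.
\end{itemize}

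\emph{Case (b).} For each $p\in R$, choose a neighbourhood $U$ of $p$ and a real analytic set $X^U\subset U$ of dimension $d$ with $R\cap U\subset X^U$, where $X^U$ is locally irreducible. Shrinking $U$, I would arrange that $X^U$ is irreducible in $U$. Here I would use that an irreducible germ has a representative irreducible in a small neighbourhood; this is one point needing care, via the decomposition into irreducible components of a representative. Applying case (a) in $U$ gives $S_U\subset R\cap U$, closed semianalytic of dimension $<d$, and a constant $m_U$.

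Then I choose a locally finite cover $\{U_i\}$ of $R$ by such neighbourhoods, with $S_{U_i}$ closed in $U_i$, and set $S\coloneqq\bigcup_i\overline{S_{U_i}}\cap R$. The closure is taken in the ambient manifold, possibly after slightly shrinking the $U_i$ so that the closures stay controlled. By local finiteness, $S$ is closed and semianalytic of dimension $<d$.

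\emph{Gluing the constants.} This is where connectedness and purity are used. If $U_i\cap U_j\cap R\neq\varnothing$, pure-dimensionality gives $\dim(R\cap U_i\cap U_j)=d$. The set $R\cap U_i\cap U_j\setminus(S_{U_i}\cup S_{U_j})$ is therefore nonempty, and at any of its points the germ $R_p$ has holomorphic closure dimension equal both to $m_{U_i}$ and to $m_{U_j}$. So the function $p\mapsto m_{U_i}$ for $p\in U_i\cap R$ is well defined and locally constant on $R$, hence constant since $R$ is connected. Thus $\dim_{\mathrm{HC}}R_p$ is constant on $R\setminus S$.

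\emph{Main obstacle.} I expect the delicate step to be the local-to-global passage in (b). One has to make sure the locally chosen irreducible $X^U$ really is irreducible on a whole neighbourhood, so that case (a) applies. One also has to make sure the union of the local exceptional sets is a closed semianalytic set of dimension $<d$. I would handle the latter by a locally finite refinement, together with the fact that semianalyticity and dimension are local properties.
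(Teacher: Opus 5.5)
Your proof is correct. The paper gives no argument of its own here: it quotes the statement from \cite[Cor.\,1.2]{AS}, where it is a consequence of Theorem~\ref{thm:AS-thm-1-1}. Your reduction is the natural derivation from that theorem: compare $R_p$ with the germ of the ambient irreducible set away from a small set, then glue the local constants using purity and connectedness. A few points should be tightened.

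\textbf{Case (a).} $\Sing X$ plays no role, since you never use regularity at $p$. Drop it from $S$ rather than having to justify that it is semianalytic. Also, $R\setminus\Int_XR=R\cap\overline{X\setminus R}$ is already closed in $R$, so the extra closure is unnecessary. Its dimension is $<d$ for the following reason. A $d$-dimensional manifold piece $N$ of this set would contain a point $q$ near which $X$ is a $d$-manifold. Then $X$ coincides with $N$ near $q$, so $q\in\Int_XR$, a contradiction.

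\textbf{Case (b): irreducibility.} Decomposing a representative into irreducible components only gives a component $X_1$, irreducible in $U$, with $(X_1)_p=X^U_p$. That does not by itself show $X_1$ stays irreducible in smaller neighbourhoods. You do not need it, though. Take a small ball $B\ni p$ with $X^U\cap B=X_1\cap B$, and apply (a) in $U$ to the semianalytic set $R\cap B\subset X_1$.

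The fact you wanted is nevertheless true. For $\e$ small, the local conic structure of $X$ at $p$, taken compatible with a Whitney stratification, shows that every connected component of every stratum in $B_\e(p)$ has $p$ in its closure. So by the identity principle, any analytic subset of $B_\e(p)$ that contains $X$ near $p$ contains all of $X\cap B_\e(p)$.

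\textbf{Case (b): local finiteness.} If $R$ is not closed, your cover of $R$ can be made locally finite only in a neighbourhood of $R$. Your $S$ is then a priori semianalytic only there, not at points of $\overline{R}\setminus R$. This does not matter for the paper's use in Proposition~\ref{prop:glob-cplx-analytic}, where $E$ is arc-symmetric and hence closed by Remark~\ref{rem:AR-facts}(i).
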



\section{Proof of Theorem~\ref{thm:glob-holo-map}}
\label{sec:pf-thm-holo}

We begin by proving the following result.

\begin{proposition}
\label{prop:glob-cplx-analytic}
Let $\Omega$ be a complex manifold, and let $E$ be a connected arc-symmetric semianalytic subset of $\Omega$ of pure dimension $d>0$. Suppose that every $\xi\in E$ admits an open neighbourhood $U^\xi$ in $\Omega$ and a C-irreducible C-analytic subset $R^\xi$ of $U^\xi$ of dimension $\dim_\R{R^\xi}=d$, such that $E\cap U^\xi\subset R^\xi$. Suppose further that $E_{\xi_0}$ is a complex analytic germ, for some $\xi_0\in E$. Then, $E$ is a complex analytic subset of $\Omega$.
\end{proposition}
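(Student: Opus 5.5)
The plan is a connectedness argument. Let $A$ be the set of $\xi\in E$ such that $E_\xi$ is a complex analytic germ, i.e. there is a neighbourhood $V$ of $\xi$ with $E\cap V$ a complex analytic subset of $V$. By hypothesis $\xi_0\in A$, and $A$ is open in $E$ by definition. Since $E$ is connected, it suffices to show that $A$ is closed in $E$; then $A=E$. Because $E$ is arc-symmetric and subanalytic, it is closed in $\Omega$ by Remark~\ref{rem:AR-facts}(i). So $E$ would then be a closed subset of $\Omega$ which is locally complex analytic at each of its points, hence a complex analytic subset of $\Omega$.

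To prove closedness, fix $\xi\in\overline{A}\cap E$ and take $U^\xi$, $R^\xi$ as in the hypothesis. Shrinking $U^\xi$, we may assume $U^\xi$ is a coordinate ball. The first step is to show $E\cap U^\xi=R^\xi$. Since $E$ is arc-symmetric, the argument of Remark~\ref{rem:AR-facts}(iii) applied inside $U^\xi$ gives the following: $E\cap U^\xi$ is a $d$-dimensional arc-symmetric subset of the C-irreducible $d$-dimensional set $R^\xi$, so it should fill $R^\xi$. Concretely, take a connected component $\Gamma$ of $\Reg_d(R^\xi)$, which is connected by C-irreducibility, at least after passing to the smooth locus of the complexification. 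If $\Gamma\not\subset E$, then $\dim(\Gamma\cap E)<d$. Running this over the connected pieces of the regular locus and using the pure dimension $d$ of $E$, we get $E\cap U^\xi\supset$ a dense part of $R^\xi$. Closedness of $E$ then gives $E\cap U^\xi=R^\xi$. I expect this step to need care, since the regular locus of a C-irreducible set need not be connected. If it fails, I would use instead the C-analytic closure of $E\cap U^\xi$ and Remark~\ref{rem:C-analytic}(iv).

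Next, use holomorphic closure dimension. Since $\xi\in\overline A$, there are points $\eta\in A\cap U^\xi$, near which $R^\xi=E$ is complex analytic of complex dimension $d/2$. So $\dim_{\mathrm{HC}}(R^\xi)_\eta=d/2$ on a nonempty open subset of $R^\xi$. By Theorem~\ref{thm:AS-thm-1-1}, applied to $R^\xi$ (irreducible) or to its complexification-irreducible structure, the holomorphic closure dimension is constant off a proper real analytic subset $S$. Hence it equals $d/2$ generically on $R^\xi$. This constant value is the minimal possible one for a $d$-dimensional set.

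Finally, conclude complex analyticity at $\xi$. Let $Z$ be the smallest complex analytic subset of $U^\xi$ (a ball) containing $R^\xi$; one can take it as the complex analytic set defined by the holomorphic functions vanishing on $R^\xi$. Near a generic point $\eta$, its germ contains $R^\xi_\eta$. Since $R^\xi$ is complex analytic of dimension $d/2$ on an open piece, identity-principle arguments give $\dim_\C Z=d/2$, i.e. $\dim_\R Z=d=\dim R^\xi$. Then $Z$ is C-analytic (complex analytic sets are C-analytic in the underlying real manifold) and, being irreducible as the closure of an irreducible set, it gives $R^\xi\subset Z$ with equal dimension. The irreducibility of $R^\xi$ together with Remark~\ref{rem:C-analytic}(iv) should force $R^\xi=Z$, after checking that $Z$ is C-irreducible of dimension $d$ and that $R^\xi$ is C-analytic in it, so that $R^\xi=Z$ or $\dim R^\xi<d$. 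Thus $E\cap U^\xi=Z$ is complex analytic and $\xi\in A$. The main obstacle is this last identification: showing the germ-level equality $\dim_{\mathrm{HC}}=d/2$ globalizes to a single complex analytic set $Z$ in $U^\xi$ of dimension $d/2$. I would try first to define $Z$ via the ideal of holomorphic functions on the ball vanishing on the open piece near $\eta$, and then propagate vanishing to all of $R^\xi$ by C-irreducibility.
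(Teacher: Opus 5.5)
Your reduction is sound: $A$ is open, contains $\xi_0$, and $E$ is closed, so it suffices to show $A$ is closed in $E$. This is what the paper's last paragraph does implicitly. The gap is in your final step, which is the real content of the proposition. You need a complex analytic set of complex dimension $d/2$ in $U^\xi$ tied to $R^\xi$. Taking $Z$ to be the smallest complex analytic set containing $R^\xi$, or an open piece of it, does not give one.

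Theorem~\ref{thm:AS-thm-1-1} is a statement about germs. A $d/2$-dimensional complex germ need not lie in any $d/2$-dimensional complex analytic subset of a ball around it; think of the graph of a holomorphic function with natural boundary. So no identity-principle argument yields $\dim_\C Z=d/2$.

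Your propagation idea does correctly show, via Remark~\ref{rem:C-analytic}(iv), that a holomorphic function on $U^\xi$ vanishing on an open piece of $R^\xi$ vanishes on all of $R^\xi$, so $Z\supset R^\xi$. But it supplies no such functions, so it gives no bound on $\dim Z$. In fact $\dim_\C Z=d/2$ is equivalent to $R^\xi$ being complex analytic, which is what you are trying to prove, so the step is circular.

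The missing ingredient is the Claim in the proof of \cite[Thm.\,4]{DF}, due to Diederich--Forn\ae ss and based on complexifying the defining functions of $R^\xi$. It says: if $E_\eta\subset R^\xi$ is a complex analytic germ, there is a complex analytic $Z\subset U^\xi$ with $Z_\eta=E_\eta$ and $Z\subset R^\xi$. The inclusion runs the other way from yours. Since $Z$ is C-analytic with $\dim_\R Z\ge 2\dim_\C E_\eta=d$, C-irreducibility of $R^\xi$ and Remark~\ref{rem:C-analytic}(iv) give $Z=R^\xi$ at once. Your holomorphic closure dimension step is then unnecessary, because $\dim_\C E_\eta=d/2$ already follows from pure dimensionality of $E$.

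Your first step, which you flagged, also fails as written. The $d$-dimensional regular locus of a C-irreducible set can be disconnected; the Whitney umbrella $\{x^2=zy^2\}$ in $\R^3$ is an example. So Remark~\ref{rem:AR-facts}(iii) applied to one component says nothing about the others. Your fallback, the C-analytic closure, only shows that the C-analytic closure of $E\cap U^\xi$ is $R^\xi$, not that $E\cap U^\xi=R^\xi$.

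The paper avoids this by reversing your order:
\begin{enumerate}
\item Once $R^\xi=Z$ is complex analytic, it is irreducible as a complex analytic set, since any complex decomposition would be a C-analytic one.
\item Hence $\Reg_\C Z$ is a connected manifold, and $E$ contains a nonempty open subset of it.
\item Remark~\ref{rem:AR-facts}(iii) then gives $\Reg_\C Z\subset E$, and closedness of $E$ gives $E\cap U^\xi=Z$.
\end{enumerate}
Finally, do not shrink $U^\xi$ to a ball. C-irreducibility of $R^\xi$ is given only in $U^\xi$, it need not survive restriction, and your whole argument depends on it.
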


\begin{proof}
By Corollary~\ref{cor:AS-cor-1-2}, there exists a closed semianalytic set $F\subset E$, of dimension $\dim_\R{F}<d$, such that $E\setminus F$ is of constant holomorphic closure dimension, say, equal to $h$. Since $E$ is of pure dimension $d$, then $F$ is nowhere dense in $E$. In particular, $E\setminus F$ contains points arbitrarily close to $\xi_0$. By assumption, there is an open neighbourhood $V$ of $\xi_0$ in $\Omega$ such that $E\cap V$ is complex analytic in $V$. It follows that $\dim_\C(E\cap V)=h$, and so $h=d/2$.

For every $\xi\in E$, let $U^\xi$ be an open neighbourhood of $\xi$ in $\Omega$ and let $R^\xi$ be an irreducible C-analytic subset of $U^\xi$, of dimension $\dim_\R{R^\xi}=d$, such that $E\cap U^\xi\subset R^\xi$. By Theorem~\ref{thm:AS-thm-1-1}, for every $\xi\in E$, there is a real analytic set $S^\xi$ in $U^\xi$, of dimension less than $d$, such that $R^\xi\setminus S^\xi$ is of constant holomorphic closure dimension. This constant holomorphic closure dimension must thus be $h$, for all $\xi$, since $E\cap U^\xi$ contains a nonempty open subset of $R^\xi\setminus S^\xi$.

Let now $\xi\in E$ be such that $E_\xi$ is a complex analytic germ. By the Claim in the proof of \cite[Thm.\,4]{DF}, there exists a complex analytic subset $Z^\xi$ of $U^\xi$ such that $(Z^\xi)_\xi=E_\xi$ and $Z^\xi\subset R^\xi$. As a complex analytic set, $Z^\xi$ is coherent, and hence C-analytic in $U^\xi$. Moreover,
\[
\dim_\R{Z^\xi}=2\dim_\C{Z^\xi}\geq2\dim_\C{E_\xi}=2h=d=\dim_\R{R^\xi}\,.
\]
Thus, by C-irreducibility of $R^\xi$ and Remark~\ref{rem:C-analytic}(iv), we have $Z^\xi=R^\xi$. Now, the arc-symmetric subset $E\cap U^\xi$ of $U^\xi$ contains a non-empty open subset of the connected (by irreducibility of $Z^\xi$) manifold $\Reg_\C{Z^\xi}$, and hence $\Reg_\C{Z^\xi}\subset E\cap U^\xi$, by Remark~\ref{rem:AR-facts}(iii). It follows that $E\cap U^\xi\supset Z^\xi$, as $E$ is closed. Since we also have $E\cap U^\xi\subset R^\xi=Z^\xi$, then $E\cap U^\xi=Z^\xi$ is complex analytic.

Now, by connectedness of $E$, the complex analytic set $Z^{\xi_0}\subset U^{\xi_0}$ can be extended to a complex analytic subset $Z$ of $\bigcup_{\xi\in E}U^\xi$, such that $E=E\cap\bigcup_{\xi\in E}U^\xi=Z$. Since $E$ is closed in $\Omega$, $Z$ is a complex analytic subset of $\Omega$.
\end{proof}

\subsubsection*{Proof of Theorem~\ref{thm:glob-holo-map}}
Let $\G_f\subset M\times N$ denote the graph of $f$. Let $\pi_M:M\times N\to M$ and $\pi_N:M\times N\to N$ be the canonical projections.
By Corollary~\ref{cor:gen-analytic}, $\G_f$ contains a closed subanalytic set $\Sg_f$, such that $\G_f\setminus\Sg_f$ is real analytic and $\dim_\R\Sg_f\leq\dim_\R\G_f-2$. The set $\Sg_f$ is nowhere dense in $\G_f$ and $\G_f\setminus\Sg_f$ is connected, as a homeomorphic image of the connected manifold $M\setminus S(f)$.

We claim that $\G_f$ is a complex analytic subset of $M\times N$. By a theorem of Peterzil-Starchenko \cite[Thm.\,4.2]{PS}, it suffices to show that for every open $\Omega\subset M\times N$,
\[
\dim_\R\Sing_\C(\Omega\cap\G_f)\leq\dim_\R(\Omega\cap\G_f)-2\,,
\]
where $\Sing_\C(X)$ denotes the set of points $\xi\in X$ at which $X_\xi$ is not a germ of a complex manifold. (By convention, $\dim\varnothing=-\infty$.)
Note that $\Sing_\C\G_f$ is subanalytic in $M\times N$ (see, e.g., \cite[Fact\,2.6(ii)]{PS}).
It is thus enough to show that $\G_f\setminus\Sigma_f$ is a complex analytic subset of the complex manifold $(M\times N)\setminus\Sigma_f$, of pure dimension $\dim_\R\G_f/2=\dim_\C{M}$.

The latter follows immediately from Proposition~\ref{prop:glob-cplx-analytic}, because every point $\xi\in\G_f\setminus\Sg_f$ admits an open neighbourhood $\Omega^\xi$ in $(M\times N)\setminus\Sg_f$ such that $(\G_f\setminus\Sg_f)\cap\Omega^\xi$ is a C-irreducible C-analytic subset of $\Omega^\xi$.
Indeed, for any such $\xi=(p,f(p))$, one can choose connected open neighbourhoods $U^p$ of $p$ in $M$ and $V^p$ of $f(p)$ in $N$ so that $\Omega^\xi\coloneqq U^p\times V^p$ is contained in $(M\times N)\setminus\Sg_f$ and $(\G_f\setminus\Sg_f)\cap\Omega^\xi$ is C-analytic in $\Omega^\xi$. By Remark~\ref{rem:C-analytic}, every C-analytic set admits a locally finite decomposition into C-irreducible C-analytic sets. Therefore, $(\G_f\setminus\Sg_f)\cap\Omega^\xi$ is C-irreducible as an analytic image of a connected manifold $\pi_M((\G_f\setminus\Sg_f)\cap\Omega^\xi)=U^p$ by $(\pi_M|_{\G_f})^{-1}$. 

Finally, notice that $\G_f\setminus\Sg_f$ contains a complex analytic germ of dimension $\dim_\R\G_f/2$. Indeed, if $p_0\in M$ is such that $f$ is holomorphic in an open neighbourhood $U$ of $p_0$ in $M$, then $U\subset M\setminus S(f)$ and the set
\[
\G_f\cap(U\times N)=\{(p,q)\in M\times N:q=f(p)\}
\]
is a complex analytic subset of $U\times N$.

Now, since $\Gamma_f$ is a complex analytic subset of $M\times N$, then $\pi_M\!|_{\Gamma_f}:\Gamma_f\to M$ is a bijective holomorphic mapping, and hence a biholomorphism (see, e.g., \cite[\S\,3.3, Prop.\,3]{Chi}). It follows that $f=\pi_N\!|_{\Gamma_f}\circ(\pi_M\!|_{\Gamma_f})^{-1}$ is holomorphic on the whole $M$.
\qed


\section{Proof of Theorem~\ref{thm:glob-CR-map}}
\label{sec:pf-thm-CR}

Assume that $M$ is of real dimension $d$ and CR-dimension $m>0$. If $M$ is a complex manifold, then CR maps on $M$ are precisely the holomorphic maps, and so
the result follows from Theorem~\ref{thm:glob-holo-map}. Assume then that $M$ has positive CR-codimension; i.e., $d - 2m >0$.
Since $M$ is real analytic, by Andreotti-Fredrics~\cite{AF}, there exists a complex manifold $X$, of 
complex dimension $ n= d-m$, such that $M$ admits a real analytic generic CR embedding into $X$. 
Thus, we may assume that 
$M \subset X$ and $\Gamma_f \subset X \times \C^N$.
Let $\pi:\G_f\to M$ be the canonical projection.

Suppose that $U$ is an open subset of $M$ on which $f$ is CR. Then, there exists a nonempty
open set $\tilde U \subset U$ such that $f: \tilde U\to\C^N$ is a real analytic CR map.
By \cite{T}, every component $f_j$ of $f|_{\tilde U}$
extends to a function $F_j$ holomorphic in some neighbourhood $\Omega$ of
$\tilde U$ in $X$. The graph of $F=(F_1,\dots,F_N)$ is a complex
analytic set in $\Omega\times\C^N$ of dimension~$n$ containing $\G_f$. 
One can readily verify that $\G_F$ is the smallest complex analytic set which
contains $\G_f$. Hence,
\begin{equation}
\label{eq:HC-dim-is-n}
\dim_{\mathrm{HC}}{\G_f}_{(p,f(p))}=n,\quad\mathrm{for\ all\ }p\in\tilde{U}\,.
\end{equation}
\smallskip

By Corollary~\ref{cor:gen-analytic}, there is a closed subanalytic set $\Sg_f$ in the graph $\G_f$ of $f$, such that $\G_f\setminus\Sg_f$ is a real analytic subset of 
$(M\times\C^N)\setminus\Sg_f$, of pure dimension $d$, and $\dim\Sg_f\leq d-2$.
As in the proof of Theorem~\ref{thm:glob-holo-map}, $M\setminus\pi(\Sg_f)=M\setminus S(f)$ is a connected real analytic manifold, and hence $\G_f\setminus\Sg_f$ is connected.

We first claim that $f$ is CR on $M\setminus S(f)$.  
The proof below relies on Theorem~\ref{thm:AS-thm-1-1} and is a simplification of the argument given 
in~\cite[Thm.\,1.3]{Sh}. 

As in the proof of Theorem~\ref{thm:glob-holo-map}, $\G_f\setminus\Sg_f$ is locally irreducible. 
Since $\G_f\setminus\Sg_f$ is also connected, it is an irreducible real analytic set in $M\times\C^N$. It follows from Theorem~\ref{thm:AS-thm-1-1} that 
there exists a closed real analytic subset $\Sg\subset \G_f\setminus\Sg_f$, of dimension
$\dim_\R\Sg< d$, such that the holomorphic closure dimension of $\G_f \setminus (\Sg_f\cup\Sg)$ is constant, and hence equal to $n=d-m$ (by \eqref{eq:HC-dim-is-n}). 
Let 
$$
(p,f(p)) \in (M \times \mathbb C^N) \cap (\G_f \setminus (\Sg_f\cup\Sg) ),
$$
and let $Y\subset X\times \mathbb C^N$ be a representative 
of the  unique smallest complex analytic germ of dimension $n$ that contains the germ of $\G_f$ at $(p,f(p))$. 
Let $\pi_X : Y \to X$ and $\pi_N : Y \to \C^N$ be the canonical projections.
Since $M$ is generic, $\pi_X $ is surjective and generically finite. (Indeed, for otherwise all fibres of $\pi_X$ would be of positive dimension, and hence $M\subset\pi_X(Y)$ would be contained in the union of countably many complex submanifolds of dimensions less than $n$.)
It follows that
there exists a complex analytic subset $V$ of a neighbourhood $U^p\subset X$ of $p$, $\dim_\C V \le n-1$, such that 
for any point $q \in U^p \setminus V$, there exits a neighbourhood $U^q \subset U^p \setminus V$ such that the inverse projection
$$
\pi^{-1}_X|_{U^q} : U^q \to Y
$$
splits into a finite collection of  biholomorphic maps $g_j: U^q \to Y$, $j=1,\dots,\nu$. 
In other words, $V$ contains all points in $U^p$ that have fibres
of positive dimension under the projection $\pi_X$ or belong to the branch locus of $\pi_X$.
Note that since $M$ is generic in $X$, the variety $V$ cannot contain $M$ near $p$, 
and therefore $\dim_\R (M \cap V) \le d-1$. 
The graph of one of the maps $g_j$ must contain the graph of $f$ over $U^q$, and so it  
is a holomorphic extension of $f|_{U^q}$. This shows that $f$ is a CR map near $q$, hence $f$ is CR on 
$(M \cap U^p) \setminus V$.

Let $\mathcal S\subset M \setminus \pi(\Sg_f)$ be the union of $\pi(\Sg)$ and all $V\cap M$, where $V$ are locally defined analytic sets as 
discussed above. To prove the claim it suffices to show that $f$ is CR at the points of $(M\setminus \pi(\Sg_f)) \cap \mathcal S$. 
Since $\mathcal S$ is subanalytic, there exists a stratification of $\mathcal S$ into real analytic manifolds 
$S_1, S_2,\dots$ of dimension at most $d-1$. The result now follows from Lemma~\ref{lem:Sh-lem-5-1}.
Indeed, we may apply the lemma inductively to every stratum of the stratification of $\mathcal S$ starting with the strata of the top dimension. 

Finally, to show that  $f$ is CR everywhere on $M$ observe that $\pi(\Sg_f)=S(f)$ is a subanalytic subset of $M$ of dimension at most $d-2$. This implies that it admits a similar stratification into smooth manifolds on $M$, and so the lemma can be used inductively again to show that $f$ is CR at the points of $\pi(\Sg_f)$.
\qed

\begin{remark}
\label{rem:alternative-assumptions}
Note that the above proof may be easily adapted to show that the conclusion of Theorem~\ref{thm:glob-CR-map} holds under an alternative set of assumptions. Namely, one may assume that $f:M\to\C^N$ is continuous, has a CR germ at some point, and the graph $\G_f$ is semianalytic and (locally) contained in a (locally) irreducible real analytic set of dimension $d=\dim{M}$.
\end{remark}


\section{Examples of arc-analytic subanalytic functions}
\label{sec:examples}

The following example is due to Kurdyka \cite{Ku-priv}.

\begin{example}
\label{ex:Kurdyka}
Consider a function $f:\R^2\to\R$ defined as
\[
f(x_1,x_2)=
\begin{cases}
x_1\cdot\exp\left(\frac{x_1^2}{x_1^2+x_2^2}\right), &\mathrm{if\ } x_1^2+x_2^2\neq0\\
0\,, &\mathrm{otherwise}.
\end{cases}
\]
The composition of $f$ with the blowing-up of the origin in $\R^2$ is real analytic, and hence $f$ is arc-analytic. To see that the graph $\Gamma_f$ of $f$ is a subanalytic subset of $\R^3$, consider the blowing-up of the origin in $\R^3$, $\sigma:M\to\R^3$. It is easy to see that the strict transform $S$ of $\Gamma_f$ by $\sigma$ is a real analytic subset of $M$. Thus, $\Gamma_f=\sigma(S)$ is the image of a real analytic set by a proper real analytic mapping.

Further, observe that $\Gamma_f$ is not semianalytic. Indeed, let $h$ be an arbitrary real analytic function in a neighbourhood $U$ of the origin in $\R^3$, and suppose that $h$ vanishes on $\Gamma_f\cap U$. Write $h=\sum_{\nu=1}^\infty{h_\nu}$, where each nonzero $h_\nu$ is a homogenous polynomial of degree $\nu$. Notice that $\Gamma_f$ is a real cone, that is, if $\xi\in\Gamma_f$ and $t\in\R$ then $t\xi\in\Gamma_f$. It follows that $h_\nu$ vanishes on $\Gamma_f\cap U$, for all $\nu$. In particular, for every nonzero $h_\nu$, $h_\nu(x_1,x_2,1)$ is a nonzero polynomial, which vanishes on a transcendental curve
\[
\Gamma_f\cap\{x_3=1\}\ =\ \{1=x_1\cdot\exp\left(\frac{x_1^2}{x_1^2+x_2^2}\right)\}\,;
\]
a contradiction. This proves that $h$ is identically zero. Consequently, the two-dimensional set $\Gamma_f$ is not contained around $0\in\R^3$ in any two-dimensional real analytic set, which means that $\Gamma_f$ is not semianalytic at the origin (see, e.g., \cite[Thm.\,2.13]{BM1}).
\end{example}

\begin{remark}
\label{rem:no-arcan-complexification}
In the context of Theorem~\ref{thm:glob-holo-map}, it is worth noting that the function $f$ from the above example does not admit an arc-analytic extension to the complexification of $\R^2$. For, let $M$ be an arbitrarily small connected open neighbourhood of $\R^2$ in $\C^2$, and let the function $\vp:M\to\C$ be the restriction to $M$ of the function
\[
\vp(z_1,z_2)=
\begin{cases}
z_1\cdot\exp\left(\frac{z_1^2}{z_1^2+z_2^2}\right), &\mathrm{if\ } z_1^2+z_2^2\neq0\\
0\,, &\mathrm{otherwise}.
\end{cases}
\]
Let $\e>0$ be small enough so that the point $(z_1,z_2)=(\e+i\e,\e-i\e)$ is contained in $M$. Note that this point lies in the locus $\{z_1^2+z_2^2=0\}$. Let $\gamma:(-\delta,\delta)\to M\subset\R^4$ be an analytic arc defined as $\gamma(t)=(t+\e,t+\e,t+\e,t-\e)$. We claim that then $\vp\circ\gamma$ is not continuous at $t=0$.
Indeed, writing $\vp=\vp_1+i\vp_2$, we have
\[
\vp_1(\gamma(t))=(t+\e)\cdot\exp(u(t))\cdot(\cos(v(t))+\sin(v(t)))\,,
\]
where
\[
u(t)=\frac{\e(t+\e)^2}{2t(\e^2+(t+\e)^2)}\mathrm{\ \ \ and\ \ \ }
v(t)=\frac{(t+\e)^3}{2t(\e^2+(t+\e)^2)}\,.
\]
It is easy to see that $\displaystyle{\lim_{t\to0}\vp_1(\gamma(t))}$ does not exist.
It follows that $\vp\circ\gamma$ is not analytic, and hence $\vp$ is not arc-analytic.
On the other hand, the graph of $\vp$ is subanalytic, by the same argument as in Example~\ref{ex:Kurdyka}.
\end{remark}
\smallskip

In contrast with Theorem~\ref{thm:glob-holo-map}, an arc-analytic mapping on a CR manifold $M$ need not be real analytic on the whole $M$, as the following example shows.

\begin{example}
\label{ex:2}
Let $M=\R^2\times\C$, let $f:\R^2\to\R$ be the function from Example~\ref{ex:Kurdyka}, and let $g:M\to\C$ be defined as $g(x_1,x_2,z_3)=f(x_1,x_2)$.
Then, $M$ is a generic CR submanifold in $\C^3$ (of dimension $4$ and CR-dimension $1$), and $g$ is an arc-analytic CR mapping on $M$. Indeed, for $g$ is holomorphic along every complex line contained in $M$, as the only such lines are of the form $\{x_1\equiv c_1,x_2\equiv c_2\}$ and $g$ is constant along these lines.

On the other hand, the graph of $g$ is not real analytic at the origin in $M\times\C$, by Example~\ref{ex:Kurdyka}. It follows that $g$ is not analytic at $(0,0,0)$.
\end{example}

\begin{example}
\label{ex:3}
Interestingly, even assuming that the graph $\G_g$ be real analytic in Theorem~\ref{thm:glob-CR-map} does not imply analyticity of the mapping $g$.
Consider, for instance, $M=\R^2\times\C$ and $g(x_1,x_2,z_3)=\sqrt[3]{x_1^6+x_2^6}$. Here, $\G_g$ is the real algebraic set $\{(x_1,x_2,z_3,w)\in M\times\C:w^3=x_1^6+x_2^6\}$, and $g$ is CR on $M$ for the same reason as in Example~\ref{ex:2}. Moreover, $g$ is arc-analytic, as it becomes analytic after composition with the blowing-up of the origin in $\R^2$. Nonetheless, $g$ is not real analytic at $(0,0,0)\in M$.
\end{example}

\bibliographystyle{amsplain}

\end{document}